\theoremstyle{plain}
\newtheorem{theorem}{Theorem}[section]
\newtheorem{lemma}[theorem]{Lemma}
\newtheorem{proposition}[theorem]{Proposition}
\newtheorem{corollary}[theorem]{Corollary}
\theoremstyle{definition}
\newtheorem{definition}[theorem]{Definition}
\DeclareMathOperator{\radius}{radius}
\newcommand{\BH}{\mathbb H}
\newcommand{\BR}{\mathbb R}
\newcommand{\partialinfty}{\partial_\infty}
\begin{document}

\title{The Two Eyes Lemma: a linking problem for horoball necklaces}

\thanks{Version 1.0, May 4, 2018}

\author[Gabai]{David Gabai}
\address{Department of Mathematics\\ Princeton University\\Princeton, NJ 08544}
\email{gabai@math.princeton.edu}
\urladdr{https://www.math.princeton.edu/directory/david-gabai}

\author[Meyerhoff]{Robert Meyerhoff}
\address{Math Department\\ Maloney Hall, Fifth Floor\\ 140 Commonwealth Avenue\\ Chestnut Hill, MA 02467}
\email{robert.meyerhoff@bc.edu}
\urladdr{https://www2.bc.edu/robert-meyerhoff/}

\author[Yarmola]{Andrew Yarmola}
\address{Department of Mathematics\\ Princeton University\\ Princeton, NJ 08544}
\email{yarmola@princeton.edu}
\urladdr{https://www.uni.lu/~yarmola}

\begin{abstract} In the course of our work on low-volume hyperbolic 3-manifolds, we came upon a linking problem for horoball necklaces in $\mathbb{H}^3$. A horoball necklace is a collection of sequentially tangent beards (i.e. spheres) with disjoint interiors lying on a flat table (i.e. a plane) such that each bead is of diameter at most one and is tangent to the table. In this note, we analyze the possible configurations of an 8-bead necklace linking around two other diameter-one spheres on the table. We show that all the beads are forced to have diameter one, the two linked spheres are tangent, and that each bead must kiss (i.e. be tangent to) at least one of the two linked spheres. In fact, there is a 1-parameter family of distinct configurations.
\end{abstract}

\maketitle


\section{Introduction}

Start with a disc $D$ of radius $r$ in the Euclidean plane.  What is the maximal number of discs of radius $r$ with disjoint interiors that each {\it kiss} $D$? We say two discs {\it kiss} if they intersect on their boundaries but not in their interiors.  The answer is 6, as can be seen by noting that the visual angle (as measured from the center of $D$) of a kissing disc is 60 degrees. Further, all such configurations are the same up to rotation about $D$, and the centers of the 6 discs are the vertices of a regular hexagon.

This leads to the classical kissing problem: what is the maximal number of equal radius spheres that simultaneously kiss a base sphere of the same radius? This question was the subject of a correspondence between Isaac Newton and James Gregory in the 17th century.  Newton thought the answer was 12 but Gregory wondered whether 13 might work.  Newton was correct, as was first proven in the nineteenth century.  One could also ask about how many essentially distinct 12-kissings there are.  It turns out that there are infinitely many that are fundamentally different and then one could ask for a description of this parameter space.  Similarly, this question is of interest in higher dimensions.  Good references for this material are the classic text ``Sphere Packings, Lattices and Groups'' by Conway and Sloane (Chapter 2) \cite{Conway:1999wc} and the semi-expository paper ``The Twelve Spheres Problem'' by Kusner, Kusner, Lagarias, and Shlosman \cite{Kusner:2016ui}.

In the course of our work on low-volume hyperbolic 3-manifolds \cite{GHMTY}, we faced a different generalization of the kissing problem.  Here we came upon a cycle (or necklace) of $\le 8$ kissing spheres (or  beads) of diameter at most one lying on a flat table.  Suppose they link around spheres $D_1$ and $D_2$, also on the table, with disjoint interiors and of height (i.e. diameter) exactly one.  As a consequence of the \emph{Two-Eyes Lemma} (see below), we are able to prove that $D_1$ and $D_2$ must kiss, that each bead must kiss $D_1$ or $D_2$, and that each bead must be of height one.  An example of this is obtained by taking a hexagonal packing of height-one spheres, labeling two abutters as $D_1$ and $D_2$, and then observing the cycle of 8 spheres encircling them.  In fact, there is a 1-parameter family of essentially different solutions that is gotten by sliding one sphere along $D_1$ (or $D_2$) and then all other sphere positions are forced.  Further, these are the only possible solutions. See Figures \ref{fig:hex} and \ref{fig:shift}. We note that when all the beads are assumed to be of height one, our result reduces to a planar problem that is quite easy to address. 

\begin{figure}[t]
  \centering
  \begin{minipage}[b]{0.45\textwidth}
    \includegraphics[width=\textwidth]{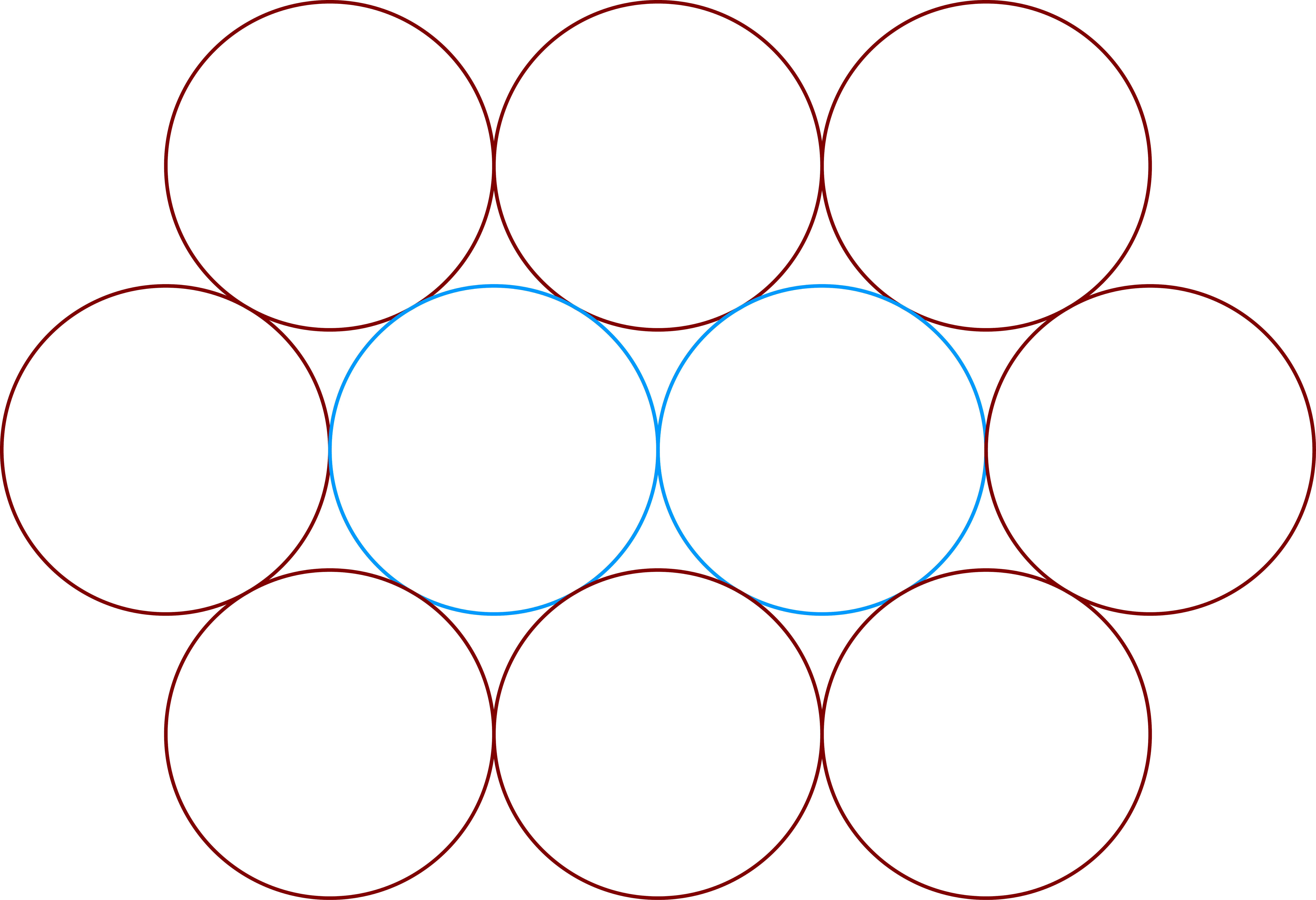}
    \caption{Hexagonal configuration.}
    \label{fig:hex}
  \end{minipage}
  \hspace*{4ex}
  \begin{minipage}[b]{0.45\textwidth}
    \includegraphics[width=\textwidth]{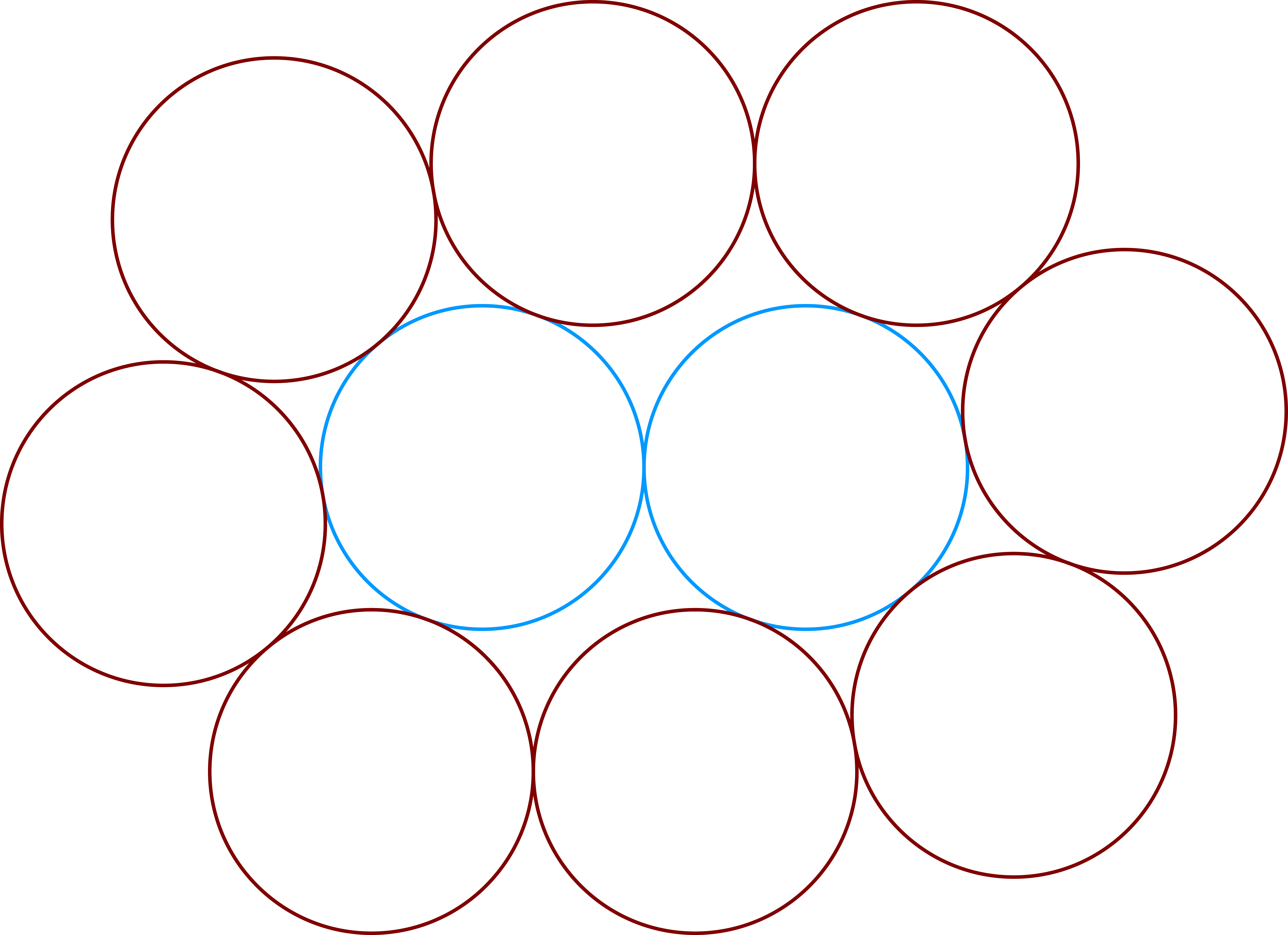}
    \caption{Non-hexagonal configuration.}
    \label{fig:shift}
  \end{minipage}
\end{figure}

We are naturally lead to the following question, which we simply pose, but do not address.  Given two abutting spheres of radius $r$ in $\mathbb{R}^3$, what is the kissing number for these two spheres? That is, what is the maximal number of (non-overlapping) radius $r$ spheres that each kiss either of the two abutting spheres?



\section{Set-Up and Statement of Main Proposition}

\begin{definition} Consider the upper-half-space model of hyperbolic 3-space $\BH^3$ with the standard projection $\pi:\BH^3\to \BR^2$.  We say that a horoball $B$ is \emph{full-sized} if $\radius(\pi(B))=1/2$ and \emph{less than full-sized} if $\radius(\pi(B))<1/2$.  Denote by \emph{center(B)} the point at infinity of $B$.\end{definition}  

\begin{definition}  A \emph{k-necklace} $\eta = N_1\cup\cdots\cup N_k$ is a cyclicly ordered set of $ k$ horoballs with disjoint interiors such that one is tangent to the next.  In what follows indices for a $k$-necklace are always modulo $k$. The $N_i$'s are called the \emph{beads} and $k$ is called the \emph{necklace} or \emph{bead number} of $\eta$. The hyperbolic geodesics connecting the centers of successive horoballs are called ${\it ties}$.
\end{definition}

In this note, we will fix a horoball $H_\infty \subset \BH^3$ centered at $\infty$ with $\partial H_\infty$ a plane of Euclidean height $1$ in the upper half-space model. We see that a horoball is full-sized (or {\it full}) if it is tangent to $H_\infty$. We would like to understand how necklaces can wind around full-sized horoballs. The main result fo this note is


\begin{proposition}\label{prop:main} If $C_1$ and $C_2$ are full-sized horoballs with disjoint interiors then the minimum bead number of a necklace $\eta$ with less-than-or-equal-to full-sized horoballs encircling $C_1$ and $C_2$ is 8.  If the bead number is 8, then all horoballs in $\eta$ must be full-sized; one example of this arises from the hexagonal packing of full-sized horoballs in the upper-half-space model.  Further, all examples with bead number 8 are obtained by sliding $N_1$ in $\eta$ along $C_i$ and then placing the remaining $N_i$ cyclically in turn making sure that each $N_i$ abuts $C_1$ and/or $C_2$.

\end{proposition}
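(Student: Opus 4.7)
Working in the upper-half-space model, I would project the entire configuration orthogonally to $\partial H_\infty = \BR^2$. Each horoball $N_i$ becomes a Euclidean disc of radius $r_i \le 1/2$ centered at $p_i$, and each $C_\ell$ becomes a radius-$1/2$ disc centered at $q_\ell$. The tangency and disjointness relations translate to
\[
|p_i - p_{i+1}|^2 = 4 r_i r_{i+1}, \qquad |p_i - p_j|^2 \ge 4 r_i r_j, \qquad |p_i - q_\ell|^2 \ge 2 r_i,
\]
for $i \ne j$ and $\ell \in \{1, 2\}$, while the encircling condition says the tie-polygon $P = p_1 p_2 \cdots p_k$ winds once around each of $q_1, q_2$.

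The heart of the argument is a two-center angular estimate. For each tie $t_i = p_i p_{i+1}$, let $\theta_i$ and $\phi_i$ denote the unsigned angle it subtends at $q_1$ and $q_2$ respectively. Since $P$ winds once around each $q_\ell$, the total angular displacement satisfies $\sum_i \theta_i \ge 2\pi$ and $\sum_i \phi_i \ge 2\pi$, so $\sum_{i=1}^{k}(\theta_i + \phi_i) \ge 4\pi$. The central claim is the pointwise bound $\theta_i + \phi_i \le \pi/2$, with equality forcing $r_i = r_{i+1} = 1/2$, $|q_1 - q_2| = 1$, and each of $N_i, N_{i+1}$ to kiss $C_1$ or $C_2$. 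Using the law of cosines to express $\cos \theta_i$ and $\cos \phi_i$ in terms of $|p_i - q_\ell|$, $|p_{i+1} - q_\ell|$, and $|p_i - p_{i+1}|$, and then inserting the constraint inequalities, I would reduce the claim to an algebraic statement (essentially $\cos^2 \theta_i + \cos^2 \phi_i \ge 1$) that can be checked by case analysis or by a Lagrange/convexity optimization in the four variables $|p_i - q_\ell|, |p_{i+1} - q_\ell|$. Dividing by $\pi/2$ gives $k \ge 8$. One can sanity-check the bound by computing it on the hexagonal configuration: the tie from $(-1,0)$ to $(-1/2, \sqrt{3}/2)$ with $q_1=(0,0)$ and $q_2=(1,0)$ yields $\theta = \pi/3$ and $\phi = \pi/6$, saturating $\pi/2$.

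Rigidity at $k = 8$ then follows by chasing equality back through the angular estimate: every tie must achieve $\theta_i + \phi_i = \pi/2$, which forces $r_i = 1/2$ for all $i$ (all beads full-sized), tangency of $C_1$ and $C_2$, and each $N_i$ tangent to at least one of $C_1, C_2$. For the 1-parameter family classification, once full-sizedness and the kissing condition are known, the configuration is determined by the angular position of $N_1$ along $\partial C_1$: each subsequent $N_{i+1}$ is forced by the two conditions $|p_{i+1} - p_i| = 1$ and $\min(|p_{i+1} - q_1|, |p_{i+1} - q_2|) = 1$, with the cyclic orientation fixing which of the two generic solutions to take. A direct trigonometric check (or a continuity argument deforming from the hexagonal base case) shows the chain always closes after exactly eight steps.

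The main obstacle is the pointwise angular bound $\theta_i + \phi_i \le \pi/2$. Because the tie length $2\sqrt{r_i r_{i+1}}$ and the exclusion radii $\sqrt{2 r_i}, \sqrt{2 r_{i+1}}$ all scale together with the radii, the case of a less-than-full-sized bead is delicate: the endpoints are allowed closer to $q_\ell$ but the tie is simultaneously shorter, and the two effects must be balanced to show the bound persists and is strict away from the full-sized kissing case. A secondary point is to reduce to a simple (winding-number-one) polygon so that the signed and unsigned angular sums agree; self-intersecting necklaces can be handled by observing that extra crossings only increase the required angle budget, strengthening rather than weakening the inequality.
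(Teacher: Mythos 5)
Your reduction to a pointwise two-center angular bound is appealing, but the central claim $\theta_i+\phi_i\le\pi/2$ is false, and this is fatal to the whole strategy. Take everything full-sized and consider the configuration (a piece of the hexagonal packing) with $q_1=(0,0)$, $q_2=(\sqrt{3},0)$, $p_i=(\sqrt{3}/2,-1/2)$, $p_{i+1}=(\sqrt{3}/2,1/2)$. All six pairwise distance constraints are satisfied (indeed every distance other than $|q_1-q_2|=\sqrt{3}\ge 1$ equals $1$, so $N_i,N_{i+1}$ are tangent to each other and to both $C_1,C_2$), yet the tie subtends an angle of $\pi/3$ at \emph{each} of $q_1$ and $q_2$, so $\theta_i+\phi_i=2\pi/3$. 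A one-parameter version persists for other separations, e.g.\ with $q_2=(2,0)$ and $p_i=(1,-1/2)$, $p_{i+1}=(1,1/2)$ one gets $\theta_i+\phi_i=4\arctan(1/2)\approx 106^\circ$. The failure occurs exactly when the tie crosses the ``waist'' between the two eyes, where closeness to $q_1$ and closeness to $q_2$ are not in tension. (Your sanity check on the hexagonal $8$-necklace only tests ties on the outside of the configuration.) Relatedly, your proposed method of proof --- optimizing over the four distances $|p_i-q_\ell|$ subject only to their lower bounds --- could not have worked even in principle: the configuration $e_1=e_2=f_1=f_2=1$ with tie length $1$ gives $\cos^2\theta+\cos^2\phi=1/2$, and as the example above shows it is even planar-realizable. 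Your stated equality conditions (forcing $|q_1-q_2|=1$) are likewise not consequences of equality in the bound.

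This is precisely the trap the paper's Two-Eyes Lemma is built to avoid. The paper does not assert a two-center estimate for an arbitrary tie: it introduces the perpendicular planes $V_1,V_2$ through center$(C_1)$, center$(C_2)$ and imposes the hypothesis $B_i\cap V_i\ne\emptyset$, i.e.\ the estimated pair must straddle the outside of the region $V$ between the planes --- exactly excluding waist-crossing ties. The two-center estimate ($\alpha+\beta\le\pi/3$, not $\pi/2$, and measured via tangent lines rather than subtended chords) is then applied only to the two straddling pairs, while all remaining beads are controlled by the single-center visual-angle bound of $\pi/3$; the count $2\pi\le 6\cdot(\pi/3)$ closes the argument. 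To salvage your approach you would need to first localize which ties can cross the waist and treat them separately, at which point you are essentially reconstructing the paper's decomposition. The winding-number bookkeeping and the equality-chasing outline for the rigidity and the one-parameter family are fine in spirit, but they all rest on the false pointwise inequality, so as written the proof does not establish $k\ge 8$.
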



\subsection*{Acknowledgements.}
The first author was partially supported by NSF grants DMS-1006553, 1607374. 
The second author was partially supported by NSF grant DMS-1308642.
The third author was partially supported as a Princeton VSRC with DMS-1006553.
\section{The Two-Eyes Lemma}

Since we will be working with projections of horoballs to the plane, we will need the following useful formula.

\begin{lemma}\label{lem:hd} (Horoball distance) Let $B_1, B_2$ be two horoballs with disjoint interiors in the upper half space model with $b_i = \partialinfty B_i \in \BR^2$ and of Euclidean height $h_i$. Then the hyperbolic distance $d_\BH(B_1,B_2)$ between $B_1, B_2$ is given by $$d(B_1,B_2) = \log\left(\frac{d_\mathbb{E} (b_1, b_2)^2}{h_1 h_2}\right).$$
\end{lemma}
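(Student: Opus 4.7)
The plan is to compute $d_{\BH}(B_1,B_2)$ directly by integrating the hyperbolic arc-length element along the geodesic that realizes this distance. Because $B_1$ and $B_2$ have disjoint interiors and distinct centers $b_1,b_2\in\mathbb{R}^2$, and because any geodesic ray ending at the center of a horoball meets the corresponding horosphere perpendicularly, the unique complete bi-infinite geodesic $\gamma$ with endpoints $b_1$ and $b_2$ is orthogonal to both $\partial B_1$ and $\partial B_2$. Hence $\gamma$ is the common perpendicular and its length between the two horospheres equals $d_{\BH}(B_1,B_2)$.

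First I would normalize coordinates. A horizontal translation followed by a rotation about the vertical axis are hyperbolic isometries that preserve Euclidean heights and Euclidean distances between boundary points, so I may assume $b_1=(0,0)$ and $b_2=(d,0)$ with $d=d_\mathbb{E}(b_1,b_2)$. Then $\gamma$ is the Euclidean semicircle of radius $d/2$ centered at $(d/2,0)$ in the $xz$-plane, parametrized by
\[
\bigl(x(\theta),z(\theta)\bigr)=\bigl(\tfrac{d}{2}(1-\cos\theta),\ \tfrac{d}{2}\sin\theta\bigr),\qquad \theta\in(0,\pi).
\]

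Next I would find the intersections of $\gamma$ with the two horospheres. The boundary $\partial B_1$ is the Euclidean sphere tangent to the floor at $b_1=0$ of diameter $h_1$, whose equation restricted to the $xz$-plane is $x^2+z^2=h_1 z$. Substituting the parametrization and using $1-\cos\theta=2\sin^2(\theta/2)$ and $\sin\theta=2\sin(\theta/2)\cos(\theta/2)$ collapses this to the clean relation $\tan(\theta_1/2)=h_1/d$, where $\theta_1$ is the entry parameter on $\partial B_1$. By the obvious symmetry swapping the two endpoints, the corresponding exit parameter on $\partial B_2$ is $\pi-\theta_2$ with $\tan(\theta_2/2)=h_2/d$.

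Finally I would integrate. The parametrization has constant Euclidean speed $d/2$ and $z(\theta)=(d/2)\sin\theta$, so the hyperbolic arc-length element along $\gamma$ is $ds=d\theta/\sin\theta$, whose antiderivative is $\log\tan(\theta/2)$. Therefore
\[
d_{\BH}(B_1,B_2)=\int_{\theta_1}^{\pi-\theta_2}\frac{d\theta}{\sin\theta}=\log\cot(\theta_2/2)-\log\tan(\theta_1/2)=\log\frac{d}{h_2}-\log\frac{h_1}{d}=\log\frac{d_\mathbb{E}(b_1,b_2)^2}{h_1 h_2},
\]
which is the claimed formula. I do not expect a serious obstacle here: the only non-computational ingredient is the identification of $\gamma$ as the distance-realizing geodesic, which is a standard consequence of the perpendicularity of a horosphere to geodesics through its center, and as a sanity check the formula correctly returns $0$ in the tangency case $d_\mathbb{E}(b_1,b_2)^2=h_1h_2$.
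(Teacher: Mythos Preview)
Your argument is correct. The parametrization, the half-angle identification of the horosphere intersection points, and the integration all check out; the identification of the geodesic through $b_1$ and $b_2$ as the common perpendicular is the right justification that the computed arc-length equals $d_{\BH}(B_1,B_2)$.

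The paper, however, takes a different and more geometric route. Rather than integrating along the geodesic from $b_1$ to $b_2$, it introduces the auxiliary geodesic $\gamma$ that is the Euclidean semicircle of radius $d_\mathbb{E}(b_1,b_2)$ centered at $b_1$ (i.e.\ with endpoints $b_2$ and $2b_1-b_2$). The top of this semicircle lies directly above $b_1$ at Euclidean height $d_\mathbb{E}(b_1,b_2)$, so the vertical distance to the top of $B_1$ is $\log(d_\mathbb{E}(b_1,b_2)/h_1)$. An elliptic rotation by $\pi$ about $\gamma$ then sends $B_1$ to a horoball at $\infty$ of Euclidean height $d_\mathbb{E}(b_1,b_2)^2/h_1$ while fixing $b_2$, after which the distance to $B_2$ is read off from the vertical geodesic through $b_2$ as $\log\bigl((d_\mathbb{E}(b_1,b_2)^2/h_1)/h_2\bigr)$. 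Your approach is entirely elementary and self-contained---just calculus on the explicit metric---whereas the paper's argument avoids any integration by exploiting the isometry group, and in doing so explains the factorization $d^2/(h_1h_2)=(d/h_1)(d/h_2)$ as coming from two separate vertical-line computations.
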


\begin{proof} Consider the point $b_2' = b_1 - (b_2 - b_1)$ and let $\gamma$ be the geodesic between $b_2, b_2'$. Note that $\gamma$ has Euclidean radius $d_\mathbb{E} (b_1, b_2)$. The highest point of $\gamma$ lies directly above (or below) the highest point $h_1$ of $B_1$ in the upper-half-space model. In particular, we have the distance $d_\BH(\gamma, B_1) = \log \left( d_\mathbb{E} (b_1, b_2)/h_1 \right)$. Note that $d_\BH(\gamma, B_1) > 0$ if and only if  $\gamma \cap B_1  = \emptyset$. Rotating $180^\circ$ around $\gamma$ by an elliptic isometry, we see that $B_1$ has to map to a horoball at infinity of Euclidean height $d_\mathbb{E} (b_1, b_2)^2/h_1$. Since $B_1, B_2$ had disjoint interiors, it follows that $h_2 \leq d_\mathbb{E} (b_1, b_2)^2/h_1$ and $$d(B_1,B_2) = \log\left(\frac{d_\mathbb{E} (b_1, b_2)^2}{h_1 h_2}\right).$$
\end{proof}

A direct corollary of this computation is a statement about visual angles.

\begin{corollary} (Visual Angle) Let $C$ be a full-sized horoball and let $B$ be an at most full-sized horoball tangent to $C$, then the visual angle of $\pi(B)$ from center$(C)$ is $\leq \pi/3$ with equality if and only if $B$ is full-sized.
\end{corollary}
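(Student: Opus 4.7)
The plan is to reduce the statement to a routine trigonometric computation via Lemma~\ref{lem:hd}. First I would fix notation: let $c = \text{center}(C) \in \BR^2$, so that $\pi(C)$ is a Euclidean disc of radius $1/2$ centered at $c$ and $C$ has Euclidean height $h_C = 1$. Similarly let $b = \text{center}(B) \in \BR^2$ and let $h_B \in (0, 1]$ be the Euclidean height of $B$, so that $\pi(B)$ is a Euclidean disc of radius $h_B/2$ centered at $b$.

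The one nontrivial step is to extract the Euclidean distance $d_\mathbb{E}(c, b)$ from the tangency hypothesis. Since $B$ and $C$ are tangent, $d_\BH(B, C) = 0$, and Lemma~\ref{lem:hd} yields
$$d_\mathbb{E}(c, b)^2 \;=\; h_B \cdot h_C \;=\; h_B,$$
so $d_\mathbb{E}(c, b) = \sqrt{h_B}$.

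With the distance and radius in hand, the problem becomes planar. The visual angle $\theta$ of $\pi(B)$ seen from $c$ is determined by a right triangle with hypotenuse $\sqrt{h_B}$ and opposite leg $h_B/2$, giving $\sin(\theta/2) = \sqrt{h_B}/2$, hence $\theta = 2\arcsin(\sqrt{h_B}/2)$. This is legitimate because $\sqrt{h_B} \geq h_B/2$ for $h_B \in (0,1]$, so $c$ lies outside $\pi(B)$ and the visual angle is well-defined. Since $\arcsin$ is strictly increasing and $\sqrt{h_B}/2 \leq 1/2$ with equality iff $h_B = 1$, one concludes $\theta \leq 2\arcsin(1/2) = \pi/3$, with equality iff $B$ is full-sized.

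There is no serious obstacle to overcome: the substantive ingredient is the single substitution into Lemma~\ref{lem:hd}, and the remainder is elementary plane geometry.
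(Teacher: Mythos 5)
Your proof is correct and is exactly the computation the paper has in mind: it states the corollary as a ``direct corollary'' of Lemma~\ref{lem:hd} without writing out the details, and your substitution $d_\mathbb{E}(c,b)=\sqrt{h_B}$ followed by $\sin(\theta/2)=\sqrt{h_B}/2$ is the intended argument. No issues.
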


We now turn to the Two-Eyes Lemma, which is depicted in Figures \ref{fig:setup}, \ref{fig:left}, \ref{fig:right} and \ref{fig:middle}.  

\begin{figure}[ht]
\begin{center}\begin{overpic}[scale=.15]{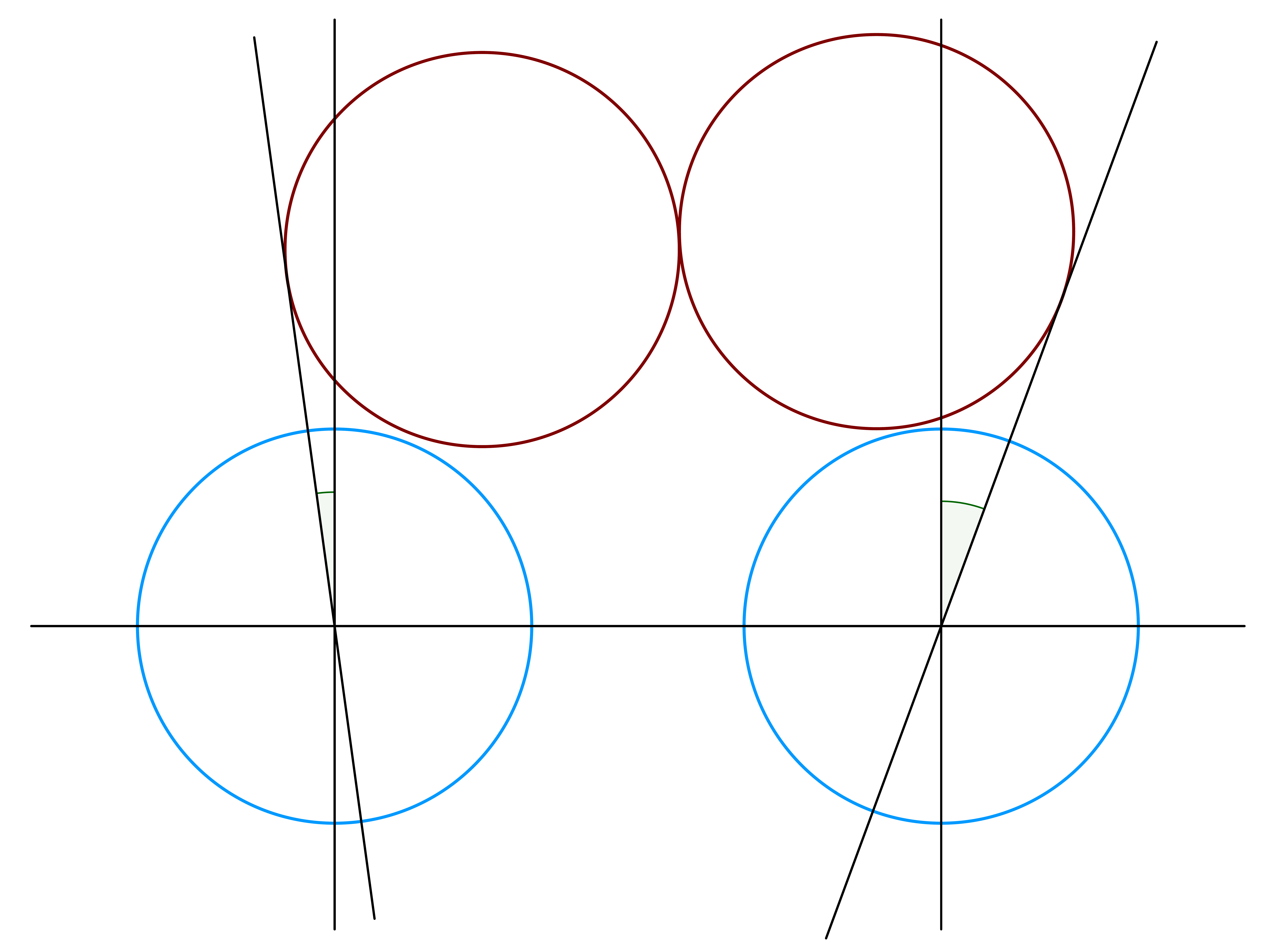}
\put(37,53){$B_1$}
\put(67,55){$B_2$}
\put(27,27){$C_1$}
\put(74,22){$C_2$}
\put(74,36){$\beta$}
\put(22,32){$\alpha$}
\put(90,67){$P_2$}
\put(16,67){$P_1$}
\put(98,24.5){$L$}
\put(50,35){$V$}
\put(26.5,73){$v_1$}
\put(73.5,73){$v_2$}
\end{overpic}
\end{center}
\caption{$\alpha + \beta \leq \pi/3$.}
\label{fig:setup}
\end{figure}

\begin{lemma}  (Two-Eyes Lemma)  Let $C_1$ and $C_2$ be full-sized horoballs with disjoint interiors. Let $B_1$ and $B_2$ be tangent horoballs with heights $h_1$ and $h_2$, respectively,  with interiors disjoint from $C_1 \cup C_2$.  Assume $h_i \leq 1$ for $i = 1,2$. Let $L$ be the line through center$(C_1) $ and center$(C_2)$ and let $v_1$ and $v_2$  be lines orthogonal to $L$ passing through center$(C_1)$ and center$(C_2)$, respectively.  Let $V_1$ and $V_2$ be the geodesic planes with boundaries containing $v_1$ and $v_2$ and let $V$ be the closure of the region bounded by  $V_1 \cup V_2$.  Suppose that for each $i, B_i\cap V_i\neq\emptyset$.  Let $P_i$ denote the line tangent to $\pi(B_i)$ through center$(C_i)$ such that $\pi(B_1\cup B_2)$ lies to one side.  Finally let $\alpha$ (resp. $\beta$) be the acute angle between $P_i$ and $v_i$.  Then,
\begin{enumerate}
\item $\alpha+\beta\le \pi/3$
\item If $\alpha + \beta = \pi/3$, then 
\begin{enumerate}
\item  $C_1$ is tangent to $C_2$
\item  $B_1$ and $B_2$ are full-sized
\item  for $i=1,2\  we have that B_i$ is tangent to $C_i $ and the line $J$ through center$(B_1)$ and center$(B_2)$ is parallel to $L$.
\end{enumerate}
\end{enumerate}
\end{lemma}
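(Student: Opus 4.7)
I would work in the projection plane with center$(C_1) = (0,0)$ and center$(C_2) = (d,0)$ for $d \geq 1$, so that $L$ is the $x$-axis and $v_1, v_2$ are the vertical lines through the two centers. Writing $b_i = $ center$(B_i)$, $\rho_i = |b_i - $ center$(C_i)|$, and $r_i = h_i/2$, I let $\theta_i = \arcsin(r_i/\rho_i)$ be the half visual angle of $\pi(B_i)$ from center$(C_i)$, and $\phi_i$ the signed angle from $v_i$ to the segment from center$(C_i)$ to $b_i$, oriented away from $C_{3-i}$. Then the outer tangent line $P_i$ sits at angle $\phi_i + \theta_i$ from $v_i$, so $\alpha = \phi_1 + \theta_1$ and $\beta = \phi_2 + \theta_2$. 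The hypothesis $B_i \cap V_i \neq \emptyset$ becomes $|\phi_i| \leq \theta_i$, and $h_i \leq 1$ combined with the disjointness $d_\BH(B_i, C_i) \geq 0$ from Lemma~\ref{lem:hd} gives $\rho_i \geq 2\sin\theta_i$ and hence $\theta_i \leq \pi/6$.

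The key step uses Lemma~\ref{lem:hd} on the tangency $B_1 \sim B_2$: $|b_1 - b_2|^2 = h_1 h_2 = 4r_1 r_2$. Expanding this in the parameters above and applying the product-to-sum identity $2\sin\theta_1\sin\theta_2 = \cos(\theta_1-\theta_2) - \cos(\theta_1+\theta_2)$ gives
\[
\rho_1^2 + \rho_2^2 + d^2 + 2d(\rho_1\sin\phi_1 + \rho_2\sin\phi_2) = 2\rho_1\rho_2\bigl[\cos(\phi_1+\phi_2) + \cos(\theta_1-\theta_2) - \cos(\theta_1+\theta_2)\bigr].
\]
The plan is to combine this identity with $d \geq 1$ and $\rho_i \geq 2\sin\theta_i$, and then apply the sum-to-product identity $\cos(\phi_1+\phi_2) - \cos(\theta_1+\theta_2) = 2\sin\bigl(\tfrac{\alpha+\beta}{2}\bigr)\sin\bigl(\tfrac{(\theta_1+\theta_2)-(\phi_1+\phi_2)}{2}\bigr)$ to isolate a trigonometric function of $\alpha + \beta$ and extract $\sin\bigl(\tfrac{\alpha+\beta}{2}\bigr) \leq \tfrac{1}{2}$.

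The main obstacle is executing this algebra cleanly; the simplest route is a preliminary monotonicity/variational reduction showing that the extremum of $\alpha + \beta$ occurs when each $B_i$ is full-sized and tangent to $C_i$ (so $\rho_i = 1, \theta_i = \pi/6$). In that reduced setting the tangency identity collapses to $d^2 + 2d(\sin\phi_1 + \sin\phi_2) + 4\sin^2\bigl(\tfrac{\phi_1+\phi_2}{2}\bigr) = 1$; treated as a quadratic in $d$ with $|\phi_i| \leq \pi/6$, the constraint $d \geq 1$ is satisfiable only when $\phi_1 + \phi_2 \leq 0$, giving $\alpha + \beta = (\phi_1+\phi_2) + \pi/3 \leq \pi/3$. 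For the equality case $\alpha + \beta = \pi/3$, each inequality must be tight: $d = 1$ gives (a), $\rho_i = 1$ and $\theta_i = \pi/6$ give (b) and the tangency $B_i$-$C_i$ part of (c), and $\phi_2 = -\phi_1$ yields $b_1 = (-\sin\phi_1, \cos\phi_1)$ and $b_2 = (1-\sin\phi_1, \cos\phi_1)$, so the line through center$(B_1)$ and center$(B_2)$ is horizontal, completing the parallelism claim in (c).
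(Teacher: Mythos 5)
Your coordinate set-up, the tangency identity $|b_1-b_2|^2=4r_1r_2$ expanded into the displayed trigonometric equation, and the reduced-case computation are all correct, and the quadratic-in-$d$ argument in the reduced case (forcing $\phi_1+\phi_2\le 0$ and hence $\alpha+\beta\le\pi/3$, with $d=1$ at equality) is a clean way to finish \emph{once you are in that case}. But the proposal has a genuine gap exactly where the difficulty of the lemma lies: the ``preliminary monotonicity/variational reduction showing that the extremum of $\alpha+\beta$ occurs when each $B_i$ is full-sized and tangent to $C_i$'' is asserted, not proved. That reduction is essentially the whole content of the paper's proof (Steps 1--5): one must exhibit explicit moves --- translations until tangencies occur, and elliptic rotations about the geodesics from center$(B_1)$ to center$(C_2)$ (resp.\ center$(B_2)$ to center$(C_1)$) that inflate a bead to full size while preserving the mutual tangency of $B_1,B_2$, the disjointness from $C_1\cup C_2$, and the constraint $B_i\cap V_i\neq\emptyset$ --- and verify that each move does not decrease (in fact increases) $\alpha+\beta$; the paper needs three auxiliary lemmas just to show that the rotation increases both $\beta$ and $\mathrm{radius}(\pi(B_2))$, plus an iteration argument to reach full size, and it explicitly checks that $B_2\cap V_2\neq\emptyset$ survives the translation. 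Nothing in your sketch supplies any of this, nor an attainment argument if you instead want to argue via an abstract extremum. Your alternative ``pure algebra'' route is likewise only a plan: the sign-indefinite term $2d(\rho_1\sin\phi_1+\rho_2\sin\phi_2)$ is not controlled, so the stated sum-to-product manipulation does not by itself yield $\sin\bigl(\tfrac{\alpha+\beta}{2}\bigr)\le\tfrac12$.

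A second, related gap concerns the equality case. Your analysis of $\alpha+\beta=\pi/3$ takes place entirely in the reduced configuration, so it only shows that the \emph{reduced} beads are full-sized and tangent to $C_i$ with $J$ parallel to $L$. To conclude (b) and (c) for the \emph{original} $B_1,B_2$ (and (a) for the original $C_1,C_2$), you need that every improvement move is \emph{strictly} $\alpha+\beta$-increasing unless the configuration is already extremal --- this is precisely the paper's bookkeeping (``any improvement strictly increases $\alpha+\beta$''), and it is what converts the equality $\alpha+\beta=\pi/3$ into rigidity of the given configuration rather than of a modified one. As it stands, your argument would allow, say, a less-than-full-sized $B_1$ whose configuration happens to achieve $\alpha+\beta=\pi/3$, unless strictness is proved. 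So the approach is the right one (it is in spirit the paper's), but the monotone reduction and its strictness must be carried out in detail for the proof to be complete.
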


\begin{figure}[t]
  \centering
  \begin{minipage}[c][][c]{0.49\textwidth}
    \begin{overpic}[scale=1]{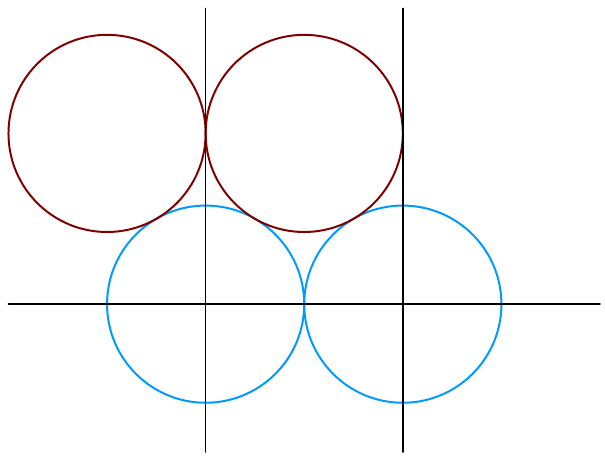}
    \put(15,52){$B_1$}
    \put(48,52){$B_2$}
    \put(36,18){$C_1$}
    \put(70,18){$C_2$}
    \put(93,27){$L$}
    \put(27,73){$v_1$}
    \put(68,73){$v_2$}
    \end{overpic}
    \caption{$\alpha = \pi/3,  \beta = 0$}
    \label{fig:left}
  \end{minipage}
  \begin{minipage}[c][][c]{0.49\textwidth}
    \begin{overpic}[scale=1]{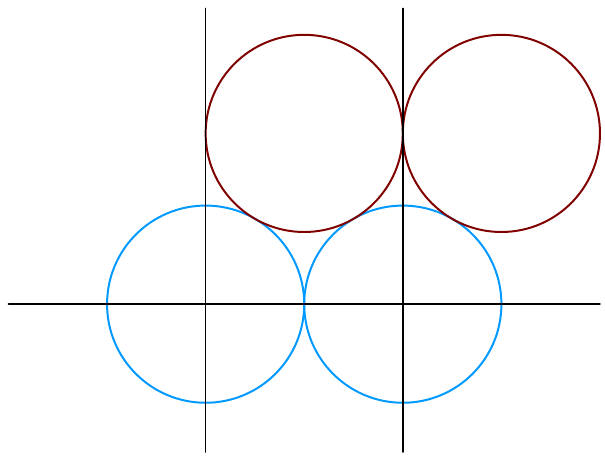} 
    \put(81,52){$B_2$}
    \put(48,52){$B_1$}
    \put(36,18){$C_1$}
    \put(70,18){$C_2$}
    \put(93,27){$L$}
    \put(27,73){$v_1$}
    \put(68,73){$v_2$}
    \end{overpic}
    \caption{$\alpha =0, \beta = \pi/3$}
    \label{fig:right}
  \end{minipage}
  \begin{minipage}[c][][c]{0.49\textwidth}
    \begin{overpic}[scale=1]{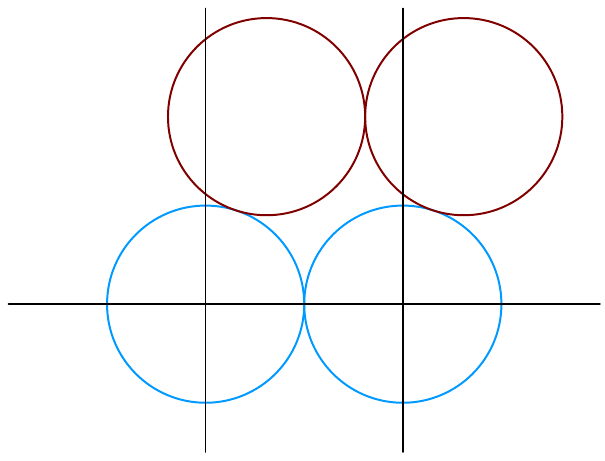} 
    \put(75,55){$B_2$}
    \put(42,55){$B_1$}
    \put(36,18){$C_1$}
    \put(70,18){$C_2$}
    \put(93,27){$L$}
    \put(27,4){$v_1$}
    \put(68,4){$v_2$}
    \end{overpic}
    \caption{$0 < \alpha, \beta < \pi/3, \alpha + \beta = \pi/3$}
    \label{fig:middle}
  \end{minipage}
\end{figure}

\begin{proof}  To start with, we can assume that $L$ is parallel to the $x$-axis.  The proof involves a series of steps whereby the positions of $B_1, B_2, C_1, C_2$ are repeatedly \emph{improved}.  The reader should note that  any \emph{improvement} strictly increases $\alpha+\beta$.  In the end $\alpha+\beta=\pi/3$ and the various horoballs satisfy the equality conclusions.  We repeatedly use the fact that an operation that moves center($B_2$) infinitesimally closer to $P_2$ is $\beta$ increasing with the analogous fact holding for $\alpha$.

Let $b = d_\mathbb{E}(\text{center}(B_1),\text{center}(B_2)), c = d_\mathbb{E}(\text{center}(C_1),\text{center}(C_2))$ and $d_{ij} = d_\mathbb{E}(\text{center}(B_i),\text{center}(C_j))$ for $i,j \in \{1,2\}$. We can assume that $\text{center}(C_1) = (0,0), \text{center}(C_2) = (c,0), \text{center}(B_1) = (x_1,y_1)$ and $\text{center}(B_2) = (x_2,y_2)$. Note that $c \geq 1$, $-h_1/2 \leq x_1 \leq h_1/2$ and $- h_2/2  \leq x_2 - c \leq h_2/2$. By Lemma \ref{lem:hd}, we also have that $b = \sqrt{h_1 h_2}$ and $d_{ij} \geq \sqrt{h_i}$, with equality if and only if $B_i$ is tangent to $C_j$.

\vskip 8pt
\noindent\emph{Step 1.}  At the cost of possibly increasing $\alpha+\beta$ we can assume that either $B_1$ is tangent to $C_1$ or $B_2$ is tangent to $C_2$.  

\vskip 8pt
\noindent\emph{Proof.}     If both $B_1\cap C_1= \emptyset$ and $B_2 \cap C_2=\emptyset$, then we can translate $B_1\cup B_2$ in the $(0,-1)$ direction until a first tangency occurs. Note that both $\alpha$ and $\beta$ increase.  If $B_1\cap C_2\neq\emptyset$ but $B_1\cap C_1 = \emptyset$, then we can obtain a contradiction as follows: we have $(x_1-c)^2 + y_1^2  = d_{12}^2 = h_1$ and $x_1^2 + y_1^2 = d_{11}^2 > h_1$. However, since $x_1 \leq h_1/2$, we obtain $1 \leq c < h_1 \leq 1$, a contradiction. A similar fact holds for $B_2$, thus the tangency is of the type claimed.\qed
 
\vskip 8pt
\noindent\emph{Step 2.}  At the cost of possibly increasing $\alpha+\beta$ we can additionally assume that either $C_1\cap C_2\neq \emptyset$ or each of $B_1$ and $B_2$ are respectively tangent to $C_1$ and $C_2$.
\vskip 8pt
\noindent\emph{Proof.} It suffices to consider the case where $B_1$ is tangent to $C_1$. If $C_2$ is disjoint from $B_2$, then translate $C_2$ in the $(-1,0)$ direction until a first tangency occurs. Note that $\beta$ increases. If $C_2$ becomes tangent to $B_1$ first, then by the computation in Step 1, $c = 1$ and $C_2$ is also tangent to $C_1$. Lastly, we observe that $B_2 \cap V_2 \neq \emptyset$ remains true as we translate by computation: if $-h_2/2  \leq x_2 - c \leq h_2/2$ fails as we decrease $c$, we have that $x_2 > c+ h_2/2$. But $x_2 \leq x_1 + b = x_1 + \sqrt{h_1h2} \leq h_1/2 + (h_1+h_2)/2$, so we obtain $1 \leq c < h_1 \leq 1$, a contradiction.\qed

\vskip 8pt
\noindent\emph{Step 3.}  At the cost of possibly increasing $\alpha+\beta$ we can further assume that for each $i, B_i\cap C_i\neq\emptyset$.

\vskip 8pt
\noindent\emph{Proof.}  It suffices to consider the case that $B_1\cap C_1\neq\emptyset$ and  $B_2\cap C_2=\emptyset$.  Let $J$ denote the ray from center$(B_1)$ through center$(B_2)$.  First assume that $J\cap P_2\neq\emptyset$.  For each $t\ge 0$ we  translate $B_2$ away from $B_1$ by moving its center Euclidean distance $t$ along $J$ away from center$(B_2)$ to obtain $B'_2(t)$.  We  then expand $B'_2(t)$ keeping its center fixed until it first hits $B_1$ to obtain $B_2(t)$.  Let $B_2$(new) be the first $B_2(t) $ that is either full-sized or satisfies $B_2(t)\cap C_2\neq\emptyset$.  Note that if $B_2$(new) $\neq B_2$, then $\beta$ increases.  We now abuse notation by denoting $B_2$(new) by $B_2$.  Thus, if $B_2\cap C_2= \emptyset$,  then $B_2$ is full-sized and by Step 2, $C_1\cap C_2\neq \emptyset$. 

If $J\cap P_2=\emptyset$, then apply a clockwise rotation about the geodesic $\gamma$ through center($B_1$) and $\infty$ until either $B_2\cap C_2\neq\emptyset$ or $J\cap P_2\neq\emptyset$.  This operation is strictly $\beta$ increasing.  If now $J\cap P_2\neq \emptyset$, then argue as in the first paragraph to conclude that either Step 3 holds or $B_2$ is full sized and $C_1\cap C_2\neq \emptyset$.  

We have now reduced to the case that $B_2$ is full-sized, $C_1\cap C_2\neq\emptyset$ and  $B_2\cap C_2=\emptyset$.  Observe that $y_2 \geq y_1$.  This is immediate if $B_1$ is full-sized.  In general, center$(B_1)$ lies on the line perpendicular to the midpoint of the segment between center$(C_1)$ and center$(B_2)$ since $B_1$ is tangent to the full-sized horoballs $C_1$ and $B_2$. Since $x_1 \leq 1/2 \leq x_2$, the maximal $y_1$ is obtained when $B_1$ is full-sized and hence $y_2 \geq y_1$.  Since $P_2$ has non-negative slope,  a clockwise rotation about $\gamma$ both transforms $B_2$ to a horoball tangent to $C_2$ and increases $\beta$.\qed

\vskip 8pt
\noindent\emph{Step 4.}     At the cost of possibly increasing $\alpha+\beta$ we can further assume that both $B_1$ and $B_2$ are full-sized.

\begin{figure}[t]
  \centering
  \begin{minipage}[c][][c]{0.49\textwidth}
    \begin{overpic}[scale=0.8]{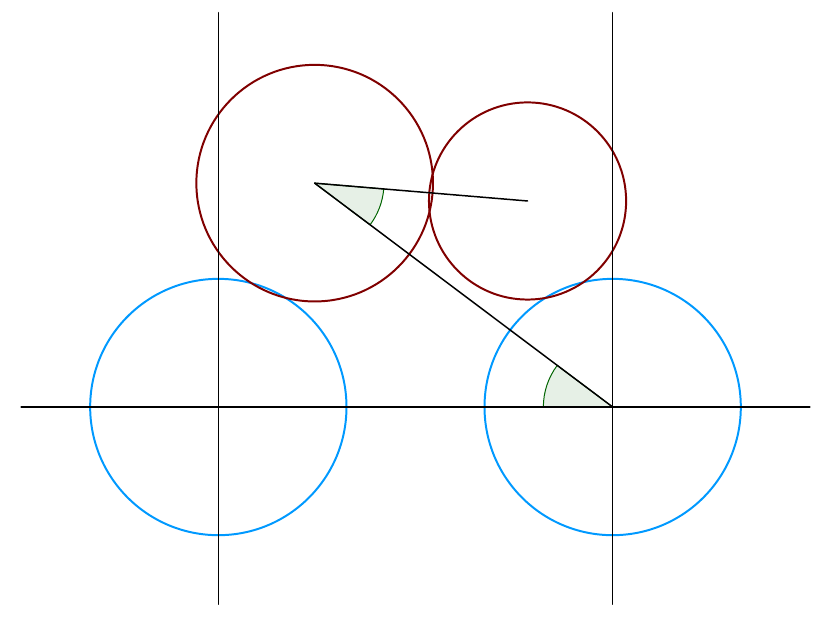}
    \put(30,52.5){$B_1$}
    \put(65,48){$B_2$}
    \put(30,18){$C_1$}
    \put(76,18){$C_2$}
    \put(93,27){$L$}
    \put(27,73){$v_1$}
    \put(67,73){$v_2$}
    \put(46,46.7){$\psi$}
    \put(59,27){$\psi'$}
    \end{overpic}
  \end{minipage} 
  \begin{minipage}[c][][c]{0.49\textwidth}
    \begin{overpic}[scale=0.8]{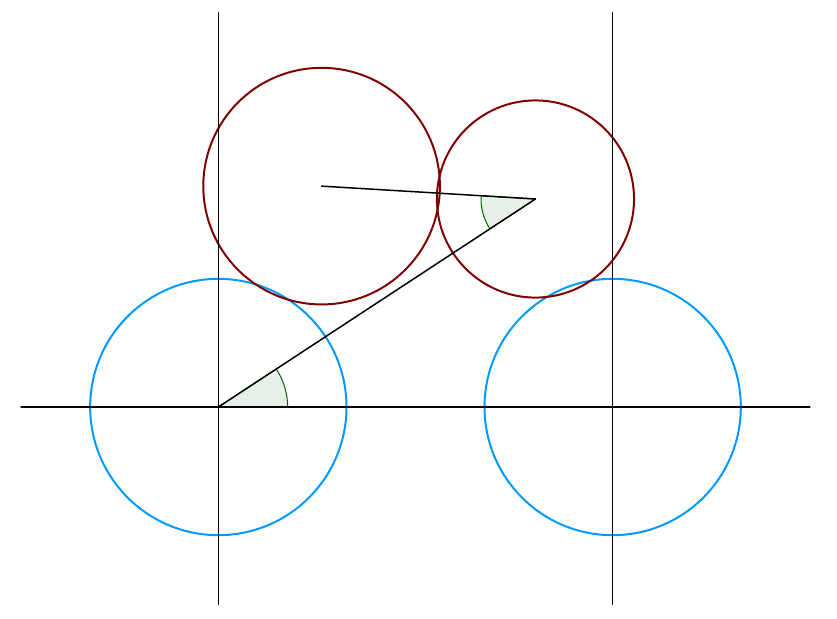} 
    \put(31.5,51){$B_1$}
    \put(65,48){$B_2$}
    \put(30,18){$C_1$}
    \put(76,18){$C_2$}
    \put(93,27){$L$}
    \put(27,73){$v_1$}
    \put(67,73){$v_2$}
    \put(53,46.5){$\phi$}
    \put(34.8,27){$\phi'$}
    \end{overpic}
  \end{minipage}
   \caption{Transforming $B_2$ by increasing $\psi$ up to $\pi/2$ increases both $\text{radius}(B_2)$ and $\beta$. Similarly for $B_1$.}
    \label{fig:angles}
\end{figure}

\vskip 8pt
\noindent\emph{Proof.}  Consider the hyperbolic geodesic $\gamma_1$ from center$(B_1)$ to center$(C_2)$ and define the angles $\phi, \phi'$ and $\psi,\psi'$ as in Figure \ref{fig:angles}.  An elliptic rotation of angle $\theta$ about $\gamma_1$ transforms $B_2$ to the horoball $B_2(\theta)$.  Being a hyperbolic isometry setwise fixing $B_1$ and $C_2$, it follows that   $B_2(\theta)$ is tangent to both $B_1$ and $C_2$.  Oriented appropriately, as $\theta$ increases so does $\psi(\theta)$, where $\psi(\theta)$ is defined as in Figure \ref{fig:angles}, where $B_2$ is replaced by $B_2(\theta)$.  If $\psi=\psi(0)<\pi/2$, the next two lemmas show that increasing $\psi$ up to $\pi/2$ strictly increases $\beta$ as well as the radius of $\pi(B_2(\theta))$.  Note that $\psi', \phi'\le \pi/3$, since say $\psi'$ is the angle at the base of a right triangle whose height is at most 1 and whose base is at least 1/2.  Since $\phi+\psi=\phi'+\psi'\le 2\pi/3$, it follows that one of $\phi$  or $\psi$ has angle at most $\pi/3$.  Without loss of generality we can assume that the latter holds.  

To prove Step 4, we first rotate $B_2$ as above so that it either becomes full-sized or $\psi=\pi/2$.  Next we rotate $B_1$ until either it becomes full-sized or $\phi=\pi/2$.   Here the rotation has axis $\gamma_2$, the geodesic from center$(B_2)$ to center$(C_1)$.  Next rotate $B_2$ so that either it becomes full sized or $\psi=\pi/2$ and so on.  After finitely many such rotations one of $B_2$ or $B_1$ becomes full sized.  Since each step involves a $\ge \pi/6$ rotation,  the process stops after a finite and computable time.  After some $B_i$ becomes full-sized, the third lemma below shows that one more rotation suffices to bring the other to full size. Thus Step 4 follows from the next three lemmas.\qed

\begin{lemma}  If $\psi(0)<\psi(\theta)\le \pi$, then $\radius(\pi(B_2(\theta)))>\radius(\pi(B_2(0)))=\radius(\pi(B_2))$.  The analogous result holds for transformations of $B_1$.\end{lemma}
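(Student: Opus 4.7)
The plan is to reduce the claim to a single-variable monotonicity statement. The key observation is that the elliptic rotation about $\gamma_1$ fixes both endpoints at infinity of $\gamma_1$---namely $\text{center}(B_1)$ and $\text{center}(C_2)$---so it preserves both $B_1$ and $C_2$ setwise. Hence for every $\theta$, the horoball $B_2(\theta)$ remains tangent to both $B_1$ and $C_2$. Applying Lemma \ref{lem:hd} to these two tangencies, with $b(\theta) := \text{center}(B_2(\theta))$, $h_2(\theta)$ the Euclidean height of $B_2(\theta)$, and $a := d_\mathbb{E}(\text{center}(B_1), \text{center}(C_2))$, we obtain
$$|b(\theta) - \text{center}(B_1)|^2 = h_1\, h_2(\theta), \qquad |b(\theta) - \text{center}(C_2)|^2 = h_2(\theta).$$

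With $\psi(\theta)$ the planar angle at $\text{center}(B_1)$ in the Euclidean triangle $\text{center}(B_1), \text{center}(C_2), b(\theta)$ (as indicated in Figure \ref{fig:angles}), the law of cosines combined with the two identities above gives
$$(1 - h_1)\,h_2(\theta) + 2a\sqrt{h_1\, h_2(\theta)}\cos\psi(\theta) - a^2 = 0.$$
When $h_1 < 1$, this is a quadratic in $u = \sqrt{h_2(\theta)}$ whose unique positive root is
$$\sqrt{h_2(\theta)} = \frac{a}{1 - h_1}\left(\sqrt{1 - h_1\sin^2\psi(\theta)} - \sqrt{h_1}\cos\psi(\theta)\right),$$
with derivative
$$\frac{d\sqrt{h_2}}{d\psi} = \frac{a\sqrt{h_1}\sin\psi}{1 - h_1}\left(1 - \frac{\sqrt{h_1}\cos\psi}{\sqrt{1 - h_1\sin^2\psi}}\right).$$
On $(0, \pi)$ the factor $\sin\psi$ is positive, and the bracketed factor is positive because $\sqrt{1 - h_1\sin^2\psi} > \sqrt{h_1}\cos\psi$ is equivalent to $1 > h_1$. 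Hence $h_2$, and therefore $\radius(\pi(B_2(\theta))) = h_2(\theta)/2$, is strictly increasing in $\psi$ on $(0, \pi)$, which gives the claim.

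The edge case $h_1 = 1$ is the only real obstacle and needs to be handled separately: the quadratic degenerates to the linear equation $2a\sqrt{h_2}\cos\psi = a^2$, giving $\sqrt{h_2} = a/(2\cos\psi)$, which is defined only on $\psi \in (0, \pi/2)$ and is manifestly increasing there. In this case the orbit of $b(\theta)$ under the rotation is the perpendicular bisector line of $\overline{\text{center}(B_1)\,\text{center}(C_2)}$, so $\psi(\theta)$ never reaches $\pi/2$ and the hypothesis $\psi(\theta) \le \pi$ is automatic. The same argument, mutatis mutandis with the roles of $B_1$ and $B_2$ swapped and $C_2$ replaced by $C_1$, handles the analogous statement for transformations of $B_1$. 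The core of the proof is thus the Möbius-isometry observation pinning down the two persistent tangencies; everything else is a routine law-of-cosines calculation and a derivative sign check.
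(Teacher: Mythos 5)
Your proof is correct, and it takes a genuinely different route from the paper's. The paper proves this by applying a M\"obius transformation sending center$(C_2)$ to $\infty$, so that $B_2(\theta)$ becomes a full-sized horoball and the elliptic rotation about $\gamma_1$ becomes a Euclidean rotation; it then identifies $\psi$ with a dihedral angle of the ideal tetrahedron on the four centers (via equality of opposite dihedral angles) and asserts that, back in the original coordinates, $\radius(\pi(B_2(\theta)))$ increases monotonically as the dihedral angle along $\gamma_1$ decreases to $0$, i.e.\ as $\psi$ increases to $\pi$. You instead stay in the original coordinates: the two persistent tangencies (your observation that the rotation fixes $B_1$ and $C_2$ setwise is exactly the one the paper also makes) combine with Lemma \ref{lem:hd} to give $|b-\text{center}(B_1)|^2=h_1h_2$ and $|b-\text{center}(C_2)|^2=h_2$, and the law of cosines then expresses $h_2$ as an explicit function of $\psi$ alone, whose strict monotonicity on $(0,\pi)$ is a one-line derivative check. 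Your version is more elementary and arguably more complete --- the paper's ``monotonically increases as the dihedral angle decreases'' claim is left without detailed justification, whereas your closed-form root of the quadratic makes it transparent, and you correctly isolate the degenerate case $h_1=1$ (where the quadratic becomes linear and $\psi$ is confined to $(0,\pi/2)$ by the isoceles-triangle/perpendicular-bisector observation). What the paper's coordinate change buys is a single picture (Figures \ref{fig:c2infty} and \ref{fig:maximized}) that is reused immediately afterward for the companion lemmas on the monotonicity of $\beta$ and on when $B_2(\theta)$ becomes full-sized, which your purely computational route does not provide.
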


\begin{proof}  Let $H_\infty$ denote the horoball $z\ge 1$.  Apply a hyperbolic isometry that takes center$(C_2)$ to $\infty$.  See Figure \ref{fig:c2infty} which shows the projections of the transformed $B_1, B_2, C_1$ and $H_\infty$ to the new $xy$ plane.   We abuse notation by continuing to call the transformed horoballs by their original names.  Notice that $H_\infty$ and $B_2$ are full-sized.  Since $\gamma_1$ is now a vertical geodesic an elliptic transformation fixing $\gamma_1$ is a Euclidean rotation in these coordinates.  A counterclockwise rotation by angle $\theta$ takes $B_2=B_2(0)$ to $B_2(\theta)$.  Consider the ideal tetrahedron $T_\theta$ with vertices center$(B_1)$, center$(B_2(\theta))$, center$(H_\infty)$, center$(C_2)$.  Since opposite dihedral angles of $T_\theta$ are equal, the angle $\psi$ in Figure \ref{fig:angles} is equal to the angle of the same name in Figure \ref{fig:c2infty}.  

In the original coordinates $\radius(\pi(B_2(\theta))) $ monotonically increases as the $\gamma_1$ dihedral angle decreases and is maximized when this angle equals $0$.   As this angle decreases to $0$ the angle $\psi(\theta)$ increases to $\pi$.  \end{proof}

\begin{figure}[t]
  \centering
  \begin{minipage}[c][][c]{0.49\textwidth}
    \begin{overpic}[scale=1]{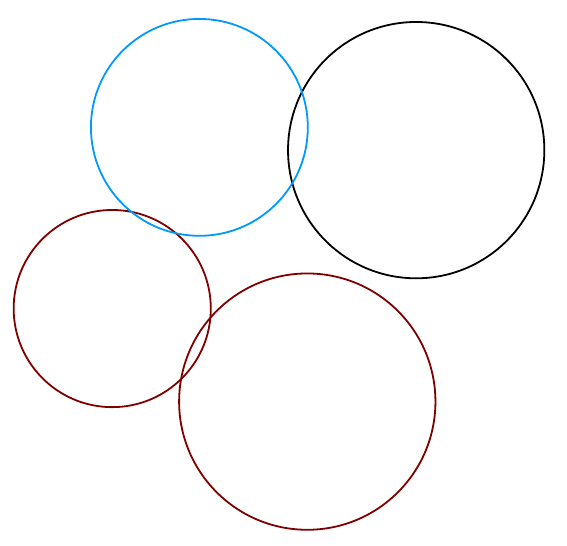}
    \put(17,39){$B_1$}
    \put(52,23){$B_2$}
    \put(32,72){$C_1$}
    \put(71,67.5){$H_\infty$}
    \end{overpic}
  \end{minipage}
  \begin{minipage}[c][][c]{0.49\textwidth}
    \begin{overpic}[scale=1]{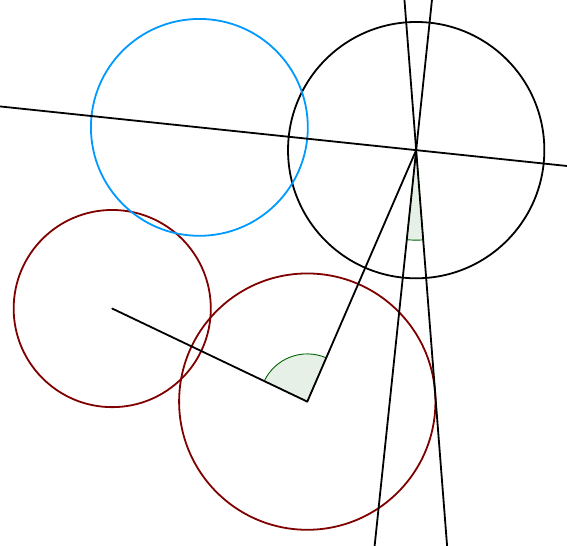} 
    \put(11,41){$B_1$}
    \put(52,18){$B_2$}
    \put(32,80){$C_1$}
    \put(80,75){$H_\infty$}
    \put(5,80){$L$}
    \put(80,5){$P_2$}
    \put(77,95){$v_2$}
    \put(48,36.5){$\psi$}
    \put(76,55){$\beta$}
    \end{overpic}
  \end{minipage}
  \caption{Sending $C_2$ to infinity and computing $\beta$.}
   \label{fig:c2infty}
\end{figure}
\begin{figure}[t]
  \centering
    \begin{minipage}[c][][c]{0.49\textwidth}
    \begin{overpic}[scale=1]{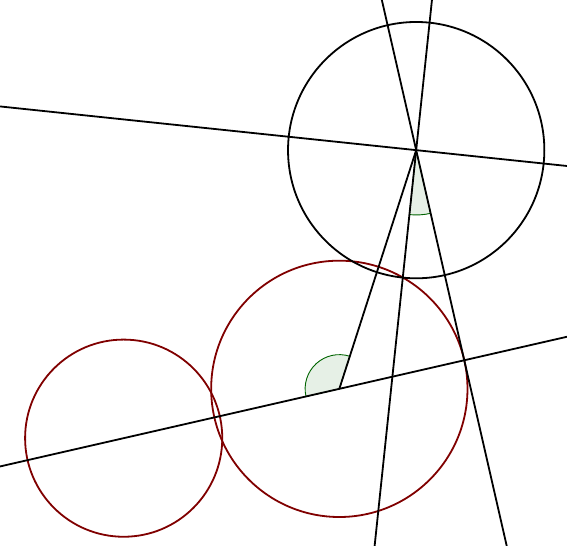} 
    \put(17,23){$B_1$}
    \put(55,18){$B_2$}
    \put(80,75){$H_\infty$}
    \put(5,80){$L$}
    \put(80,5){$P_2$}
    \put(77,95){$v_2$}
    \put(48,36.5){$\psi$}
    \put(78,55){$\beta$}
    \end{overpic}
    \caption{Maximizing $\beta$}
    \label{fig:maximized}
  \end{minipage}
\end{figure}

\begin{lemma}  If $\psi(0)<\psi(\theta)\le \pi/2$, then $\beta(\theta)>\beta(0):=\beta$.  The analogous result holds for transformations of $B_1$.\end{lemma}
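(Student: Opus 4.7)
The plan is to follow the same strategy as the preceding lemma: apply a hyperbolic isometry sending center$(C_2)$ to $\infty$. In the new upper-half-space coordinates, $\gamma_1$ is the vertical geodesic based at center$(B_1)$, and the rotation by angle $\theta$ about $\gamma_1$ acts on the base plane as the Euclidean rotation by $\theta$ about center$(B_1)$. The line $v_2$ in the new coordinates is the image of the original $v_2$, a fixed straight line perpendicular to the image of $L$, which is unchanged by the rotation. Meanwhile the disc $\pi(B_2(\theta))$, its distinguished tangent point, and hence the tangent line $P_2(\theta)$ itself, all rotate rigidly by $\theta$ about center$(B_1)$; for $\theta$ in the range under consideration the ``one-side'' condition defining $P_2$ is preserved, so $P_2(\theta)$ is the Euclidean rotation of $P_2(0)$ about center$(B_1)$.

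It follows that $\beta(\theta)$, the acute Euclidean angle between $P_2(\theta)$ and $v_2$, is a piecewise linear triangle-wave function of $\theta$ with slope $\pm 1$ and values in $[0,\pi/2]$. In the range $\psi(0) < \psi(\theta) \le \pi/2$, the angle $\psi(\theta)$ is likewise linear in $\theta$ with slope $+1$. Therefore, to show that $\beta$ is strictly increasing on this range, it suffices to check that $\beta(\theta)$ and $\psi(\theta)$ attain their common maximum value $\pi/2$ at the same value of $\theta$---that is, that the two triangle waves are \emph{in phase}.

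I would verify this by direct inspection at the critical configuration $\theta = \theta^*$ with $\psi(\theta^*) = \pi/2$, pictured in Figure~\ref{fig:maximized}. At $\theta^*$, the geometry is symmetric: center$(B_2(\theta^*))$ lies on the horizontal line through center$(B_1)$, with the image of $L$ also horizontal in the new coordinates. By the symmetry of $\pi(B_2(\theta^*))$ about this horizontal line, the distinguished tangent line $P_2(\theta^*)$---the tangent on the side away from $\pi(B_1)$---is itself horizontal, parallel to $L$ and hence perpendicular to $v_2$. This gives $\beta(\theta^*) = \pi/2$, so $\beta$ and $\psi$ are in phase, and thus $\beta(\theta) > \beta(0)$ whenever $\psi(\theta) > \psi(0)$ in the range $\psi(\theta) \le \pi/2$.

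The main obstacle will be pinning down the precise meaning of $\psi(\theta)$ in the new coordinates (so as to confirm linearity and identify the critical $\theta^*$) and verifying that the ``one-side'' condition defining $P_2$ is preserved throughout the relevant range. The corresponding conclusion for $B_1$ follows by the symmetric argument, using a hyperbolic isometry that sends center$(C_1)$ to $\infty$.
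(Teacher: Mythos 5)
There is a genuine gap, and it sits at the heart of your argument. After applying the isometry sending center$(C_2)$ to $\infty$, the line $P_2$ --- which in the original coordinates is the tangent to $\pi(B_2)$ \emph{through the fixed point} center$(C_2)$ --- becomes the tangent line to $\pi(B_2(\theta))$ through the fixed finite point that is the image of the original $\infty$, i.e.\ the center of the disc labelled $H_\infty$ in Figure \ref{fig:c2infty}. That anchor point is not center$(B_1)$, so $P_2(\theta)$ is pinned at a point other than the center of rotation and does \emph{not} rotate rigidly with the disc: as $\theta$ varies, the point of tangency slides around $\partial\pi(B_2(\theta))$ rather than being carried along by the rotation. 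Consequently $\beta(\theta)$ is not a piecewise linear triangle wave of slope $\pm 1$, and the entire ``in phase'' mechanism collapses. (The same objection applies if you stay in the original coordinates: $P_2$ pivots about center$(C_2)$, not about center$(B_1)$.)

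The in-phase claim is also false on its face, and this is precisely the subtlety the lemma is about. The correct critical-point condition for $\beta$ --- the direction, as seen from the fixed anchor point, of the tangent line to a disc of fixed radius whose center moves on a circle about center$(B_1)$ --- is that the velocity of the disc be parallel to $P_2$, i.e.\ that $P_2$ be orthogonal to the segment $J$ joining center$(B_1)$ to center$(B_2(\theta))$; this is the configuration of Figure \ref{fig:maximized}, and one checks that there $\psi>\pi/2$ \emph{strictly}. That is the paper's argument: $\beta$ keeps increasing throughout the range $\psi\le\pi/2$ because its maximum is attained only later. Your symmetric picture at $\psi=\pi/2$ does not hold: $\psi=\pi/2$ is a condition on an angle of the triangle with vertices center$(B_1)$, center$(B_2(\theta))$ and the image of $\infty$, and it forces neither $J$ to be parallel to $L$ nor $P_2$ to be perpendicular to $v_2$; in general there is no reason for $\beta$ to reach $\pi/2$ at all. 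To repair the proof you would need to (i) differentiate the direction of the tangent line from a fixed external point to the rotating disc, identifying the critical configuration $J\perp P_2$, and (ii) verify that this configuration occurs only after $\psi$ has passed $\pi/2$.
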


\begin{proof}  Figure \ref{fig:c2infty} shows how to compute $\beta$.  Note that $\beta$ is maximized when the line from center$(B_1)$ through center$(B_2)$ is orthogonal to $P_2$ at which point $\psi>\pi/2$. See Figure \ref{fig:maximized}  \end{proof}


\begin{lemma}  If $B_1$ is full sized, then for some $\theta$ with $\psi(0)<\psi(\theta)< \pi/2, B_2(\theta)$ is full-sized.  The analogous result holds for transformations of $B_1$.\end{lemma}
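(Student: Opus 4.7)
The plan is to set up coordinates as in the proof of the first of the two preceding lemmas by applying an isometry sending $\mathrm{center}(C_2)$ to $\infty$, normalized so the image of $C_2$ becomes the standard horoball $\{z\geq1\}$. Continuing to denote the transformed horoballs by their original names, in these new coordinates $B_2$ has Euclidean height $1$ (tangent to the new reference), the image $\tilde H$ of the original $H_\infty$ is another height-$1$ horoball---this time with \emph{finite} center (since $H_\infty$ was tangent to $C_2$, both full-sized)---and the rotation $\theta\mapsto B_2(\theta)$ about $\gamma_1$ becomes the Euclidean rotation about the vertical line above $\mathrm{center}(B_1)$.

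The key observation is that, because $B_1$ is full-sized, $B_1$ is tangent to $\tilde H$ in the new coordinates. By Lemma~\ref{lem:hd}, both $\mathrm{center}(\tilde H)$ and $\mathrm{center}(B_2(\theta))$ then lie on the Euclidean circle of radius $r=1/d_{12}$ about $\mathrm{center}(B_1)$, where $d_{12}$ is the old Euclidean distance from $\mathrm{center}(B_1)$ to $\mathrm{center}(C_2)$: these are each height-$1$ horoballs tangent to $B_1$. Moreover, $B_2(\theta)$ is full-sized in the original coordinates iff it is tangent to $\tilde H$ in the new, which, for two height-$1$ horoballs, is just the chord-length condition $d_\mathbb{E}(\mathrm{center}(B_2(\theta)),\mathrm{center}(\tilde H))=1$.

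A chord of length $1$ fits in this circle precisely when $2r\geq1$, i.e.\ $d_{12}\leq2$, which follows from the triangle inequality $d_{12}\leq d_\mathbb{E}(B_1,B_2)+d_\mathbb{E}(B_2,C_2)=2\sqrt{h_2}\leq2$, with strict inequality when $B_2$ is not full-sized. The chord subtends an apex angle $\Delta=2\arcsin(d_{12}/2)\in[\pi/3,\pi]$ at $\mathrm{center}(B_1)$, giving two candidate rotation angles $\theta^*$; I select the one whose rotation direction corresponds to $\psi$ increasing from $\psi(0)$. Via the opposite-dihedral-angles identification used in the first preceding lemma, $\psi(\theta^*)$ equals a base angle of the isoceles triangle on $\mathrm{center}(B_1),\mathrm{center}(B_2(\theta^*)),\mathrm{center}(\tilde H)$ (two sides of length $r$, apex $\Delta$), which is $(\pi-\Delta)/2\in[0,\pi/3]$; hence $\psi(\theta^*)\leq\pi/3<\pi/2$. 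The lower bound follows from the strict monotonicity of the first preceding lemma: $B_2(0)$ not full-sized gives $h_2(0)<1=h_2(\theta^*)$, so $\psi(0)<\psi(\theta^*)$. Combining, $\psi(0)<\psi(\theta^*)<\pi/2$, as required; the analogous statement for rotations of $B_1$ about $\gamma_2$ follows by interchanging the roles.

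The most delicate step is the dihedral-angle identification in the last paragraph: pinning down unambiguously which base angle of the isoceles triangle equals $\psi(\theta^*)$, and selecting the candidate tangency position that corresponds to the monotonicity direction of the preceding lemmas, will require some care with the sign/orientation conventions of the figures. Everything else reduces to an elementary circle-geometry and triangle-inequality computation in the new coordinates.
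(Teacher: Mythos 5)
Your proof is correct and follows essentially the same route as the paper's (which is only two sentences long): send $\mathrm{center}(C_2)$ to $\infty$, note that $B_1$ full-sized means $B_1$ is tangent to $H_\infty$, so the image of $H_\infty$ is a height-one horoball lying on the same circle of candidate positions as the $B_2(\theta)$, and the rotation reaches tangency with it at some $\psi<\pi/2$. The quantitative details you supply --- the radius $1/d_{12}$, the chord-fitting condition $d_{12}\le 2$ via the triangle inequality, and the isoceles-triangle bound $\psi(\theta^*)=(\pi-\Delta)/2\le\pi/3$ --- are exactly the content the paper leaves implicit, and they check out.
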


\begin{proof}  Since $B_1$ is full sized it is tangent to $H_\infty$ in addition to $B_2$.  Since a horoball in the original coordinates is full sized exactly when it is tangent to $H_\infty$, we observe that $B_2(\theta)$ becomes full sized for some $\psi<\pi/2$.\end{proof}

\noindent\emph{Step 5.}  For $ i=1,2$ let $L_i$ denote the line through center$(B_i)$ and center$(C_i)$.    At the cost of possibly increasing $\alpha+\beta$ we can further assume that both $L_1$ and $L_2$ are parallel and hence $B_1$ (resp $C_1$) is tangent to $B_2$ (resp. $C_2$) and the line $J$ through the centers of $B_1$ and $B_2$ is parallel to $L$.

\vskip 8pt

\noindent\emph{Proof.}  A clockwise rotation of $\BH^3$ applied to $B_2$ using the vertical geodesic  through $C_2$ as axis takes $L_2$ to a line parallel to $L_1$.  Let $B_2'$ denote the rotated $B_2$.  This operation increases $\beta$ and makes $J$ parallel to $L$ but loses the $B_1, B_2$ tangency.  Next translate $B_2'$ and $C_2$ in the $(-1,0)$ direction until the translated $C_2$ becomes tangent to $C_1$, in which case the translated $B_2'$ also becomes tangent to $B_1$.  \qed

\vskip 8pt

\noindent\emph{Step 6.}  $\alpha+\beta=\pi/3$ and the conclusions (a)-(c) also hold.

\vskip 8pt
\noindent\emph{Proof.}  We have already shown that conclusions (a)-(c) hold.  Since $B_2$ is full-sized and tangent to $C_2$, the visual angle of $\pi(B_2)$ from center$(C_2)$ is equal to $\pi/3$.  Using the fact that $B_1\cup C_1$ is a translate of $B_2\cup C_2$ it follows that this visual angle decomposes into $\alpha+\beta.$ \qed

This completes the proof of the Two-Eyes Lemma.\end{proof}

\section{Proof of Proposition \ref{prop:main}}

The proof of the main proposition is now just a counting argument. 

\noindent\emph{Proof of Proposition \ref{prop:main}}   As in the proof of the Two-Eyes Lemma, consider the hyperplanes $V_1,$ and $V_2$. Since the necklace $\eta$ winds around $C_1$ and $C_2$, it follows that $V_1$ and $V_2$ each intersect at least two horoballs of $\eta$. For $i = 1,2$, let $B_i^U, B_i^L$ be these horoballs intersecting $V_i$ with centers in the upper and lower half-planes, respectively. These four horoballs are distinct. Further, we can assume that $B_2^U, B_2^L$ have the largest $x$-coordinates and $B_1^U, B_1^L$ are the smallest $x$-coordinates amongst all choices in $\eta$ satisfying the non-empty intersection conditions. Since all horoballs are at most full-sized, visual angle around center$(C_i)$ tells us that, away from the critical case where {\it both} $B_i^U$ and $B_i^L$ are tangent to $V_i$, we need at least two more horoballs to connect $B_1^L$ to $ B_1^U$ and at least two more to connect $B_2^U$ to $B_2^L$ in the clockwise direction along $\eta$. Away from this critical case, the necklace must have at least $8$ horoballs.

Assume we are in the critical case where $B_i^U$ and $B_i^L$ are tangent to $V_i$ for some $i$. Without loss of generality, we can take $i =1$. By the minimality of the $x$-coordinates and the fact that necklace horoballs are sequently tangent, we can assume that $B_1^U$ and $B_1^L$ lie entirely to the left of $V_1$, aside from the points of tangency. The region $V$, between $V_1$ and $V_2$, will then contain at least two more horoballs, but these cannot be $B_2^U, B_2^L$ by the maximality of the $x$-coordinate and because all the horoballs are at most full-sized. Therefore, we need at least 1 horoball to join $B_1^L$ to $B_1^U$, 2 more horoballs in $V$, and at least 1 more horoball to join $B_2^U$ to $B_2^L$, giving us a total of $8$.

We turn to the case where $\eta$ has exactly $8$ horoballs. It remains to show that all are full sized and the configuration is obtained by sliding the hexagonal example. For this, we will use the Two-Eyes Lemma and visual angle arguments. Assume that in each of the pairs $\{B_1^U, B_2^U\}$ and $\{B_2^L, B_1^L\}$ at least one of the horoballs is not tangent to the associated $V_i$. In this setting, our counting argument in the first paragraph gives that the horoballs $B_1^U$ and $B_2^U$ are tangent. Similarly for $B_1^L$ and $B_2^L$. Let $\alpha, \beta$ be the angles from the Two-Eyes Lemma applied to the pair  $\{B_1^U, B_2^U\}$ and $\alpha', \beta'$ be the angles for the pair $\{B_2^L, B_1^L\}$. 
It follows that $\alpha + \beta \leq \pi/3$ and $\alpha' + \beta' \leq \pi/3$. For each $i$, we have exactly two horoballs in $\eta$ connecting $B_i^L$ to $B_i^U$ with centers in the complement of $V$. Let $\delta_i, \varphi_i$ be the visual angles from center$(C_i)$ of these horoballs. Then, cutting out $V$, we have that the sum of the angles satisfies $$2 \pi \leq (\beta + \alpha) + \delta_1 + \varphi_1 + (\beta' + \alpha') + \delta_2 + \varphi_2 \leq  \frac{\pi}{3}+ \frac{\pi}{3}+ \frac{\pi}{3}+ \frac{\pi}{3}+ \frac{\pi}{3}+ \frac{\pi}{3} = 2\pi.$$
It follows that $\delta_i = \varphi_i = \pi/3$ and $\alpha+\beta = \alpha' + \beta' = \pi/3$. Thus, all horoballs in $\eta$ are full-sized and tangent to $C_1$ or $C_2$. Hence, all the horoballs in $\eta$ are tangent to $C_i$ are part of the hexagonal packing. This allows us to compute $\alpha = \pi - \delta_1 - \varphi_1 - \beta' = \pi/3 - \beta'$ and, similarly, $\beta = \pi/3 - \alpha'$. Since $\alpha + \beta = \pi/3$ and $\alpha' + \beta' = \pi/3$, we obtain a one-parameter family of horoballs parametrized by, say, $\alpha$. 

Without loss of generality, the remaining case is where $B_1^U$ is tangent to $V_1$ and $B_2^U$ is tangent to $V_2$
(with the $x$ coordinate max/min condition above). There is then at least one horoball from $B_1^U$ to $B_2^U$ in the clockwise direction along $\eta$. Let $D_{1,1}, D_{1,2}$ be the next two horoballs in the counter-clockwise direction from $B_1^U$ and $D_{2,1}, D_{2,2}$ the next two horoballs in the clockwise direction from $B_2^U$ along $\eta$. If the visual angle of at least one of $D_{i,j}$ from center$(C_i)$ is $< \pi/3$, then $D_{i,2} \neq B_i^L$. Counting the horoballs tells us that at least one of $B_i^L$ has to be tangent to $V_i$. In fact, both must. Indeed, if $B_2^L$ is tangent to $V_2$ then it cannot be tangent to $B_1^L$ and one more horoball is required in the clockwise direction. Similarly, if $B_1^L$ is tangent to $V_1$. Thus, $D_{i,2} = B_i^L$ for $i  = 1,2$ and $D_{i,j}$ have visual angle $\pi/3$, which means they are full-sized and tangent to $C_i$. The horoballs that connect $B_1^U$ to $B_2^U$ and $B_2^U$ to $B_1^U$ in the clockwise direction must also be full-sized and tangent to both $C_1$ and $C_2$ to bridge the ``width'' of $V$. Thus, we are in the configuration above where $\alpha = \pi/3$.

\bibliographystyle{amsalpha}
\bibliography{biblio.bib}

\end{document}